\documentclass[journal,twoside,web]{ieeecolor}

\usepackage{generic}
\usepackage{lcsys}

\usepackage{amsmath,amssymb,amsfonts,mathtools}
\usepackage{mathtools, cuted}
\usepackage{blindtext}
\usepackage{graphicx}
\usepackage{hyperref}
\usepackage{pgfplots}
\usepackage{pgfplotstable}
\usepgfplotslibrary{groupplots,fillbetween}
\usetikzlibrary{calc}
\pgfplotsset{compat=1.18}
\usetikzlibrary{plotmarks}

\definecolor{mblue}{HTML}{005AB5}
\definecolor{mred}{HTML}{D55E00}
\definecolor{mpink}{HTML}{7B3294}
\definecolor{mcyan}{HTML}{DDAA33}
\definecolor{onlineblue}{RGB}{0,114,178}
\definecolor{valueorange}{RGB}{213,94,0}
\definecolor{unboundedgray}{RGB}{232,232,232}
\definecolor{gridgray}{RGB}{214,214,214}
\definecolor{mplpink}{HTML}{7B3294} 
\definecolor{mplred}{HTML}{D55E00} 
\definecolor{mplblue}{HTML}{005AB5} 
\definecolor{mplcyan}{HTML}{DDAA33}
\definecolor{mplgray}{HTML}{111111}     

\newtheorem{thm}{Theorem}

\newtheorem{prop}{Proposition}

\newcommand{\F}{\mathcal{F}}
\newcommand{\Sc}{\mathcal{S}}
\newcommand{\Xb}{\mathbf{X}}
\newcommand{\Ub}{\mathbf{U}}
\newcommand{\Zb}{\mathbf{Z}}

\DeclareMathOperator{\vect}{vec}
\DeclareMathOperator{\mat}{mat}
\DeclareMathOperator{\diag}{diag}

\newcommand{\review}{\color{black}}

\def\BibTeX{{\rm B\kern-.05em{\sc i\kern-.025em b}\kern-.08em
    T\kern-.1667em\lower.7ex\hbox{E}\kern-.125emX}}
\begin{document}

\title{Bounded Linear Programs for Data-Driven Optimal Control via Moment-Matching}

\author{Andrea Martinelli, Lucia Pezzetti, Niklas Schmid, Florian D\"orfler and John Lygeros
\thanks{Research supported by the European Research Council under the Horizon 2020 Advanced Grant No. 787845 (OCAL) and the ETH AI Center.}
\thanks{The authors are with the Automatic Control Laboratory, ETH Z\"urich, Physikstrasse 3, 8092 Zurich, Switzerland. E-mails: \tt\footnotesize \{andremar, lpezzetti,nikschmid,dorfler,lygeros\}@ethz.ch.}%
}

\maketitle
\thispagestyle{empty}

\begin{abstract}

Linear programming (LP) formulations offer a conceptually elegant approach to infinite-horizon, model-free nonlinear optimal control in continuous spaces. However, in addition to the curse of dimensionality, their practical use is limited by the difficulty of consistently obtaining bounded solutions.
In this work, we use moment-matching techniques to derive sufficient boundedness conditions in terms of the available dataset and the cost vector of the LP. Moreover, we discuss practical design methods for nonlinear systems and polynomial features.
\end{abstract}

\begin{IEEEkeywords}
Optimal Control, Reinforcement Learning, Linear Programming, Markov Decision Processes.
\end{IEEEkeywords}


\section{INTRODUCTION} \label{Sec:intro}

\IEEEPARstart{T}{he} linear programming (LP) method is a fundamental approach to solving optimal control problems \cite{LasserreDTMCP,BertsekasVol2}. Introduced by Manne in the 1960s \cite{ManneLP}, the method is based on the idea that the solution to the Bellman equation can be cast as the optimizer of an LP. Like all dynamic programming methods, the LP approach suffers from the \textit{curse of dimensionality} \cite{BellmanDP,bertsekas2019RL}. The exact LP formulation for finite state-action spaces was primarily viewed as a theoretical result for many years, since it requires formulating an LP where the number of constraints is equal to the number of state-action pairs \cite[Ch. 3.8]{PowellADP}. The method has seen a resurgence in recent years, in conjunction with increasing compute, major advances in LP solution methods \cite{BoydConvexOptimization}, and especially through approximation schemes to deal with continuous state-action spaces and model-free approaches \cite{SutterCDC2017,PaulADP,WangIteratedBellInequalities,SummersSumofSquares}.

Many real-world applications require operating a system without full knowledge of its dynamics \cite{SuttonRLanIntroduction}. Learning how to optimally control a system is possible by letting it interact with the environment, collecting information about state transitions and costs over time, and using the resulting data to build a sampled version of the LP \cite{RelaxedBellmanOp, MartinelliAffine, GoranADP}. This data-driven version of the LP is typically formulated in terms of the $Q$-function \cite{WatkinsQLearning,CogillDecentralizedADP}, so that the associated control policy can be computed in a model-free fashion. Most of the recent literature has shown encouraging results, particularly for low-dimensional systems. For higher-dimensional problems, the key obstacle is the lack of systematic boundedness guarantees for the approximate LP.


As showcased by the numerical results in \cite{RelaxedBellmanOp}, the number of constraints needed to keep the LP bounded grows dramatically with the dimension of the system. Most existing examples therefore rely on low-dimensional systems, large datasets, finite state-action spaces, or explicit bounds/regularizers. {\review Regularizers are proposed in \cite{SutterCDC2017,deFariasConstraintSampling,petrik2010feature_selection}, but a drawback is that they can introduce a significant distortion to the optimizers. Instead of regularizing, the authors in \cite{lakshminarayanan2018linearly_relaxed_alp} and \cite{shariff2020efficient_planning} propose to select and combine specific constraints so as to preserve suitable geometric structure. Unfortunately, this idea cannot be directly applied in a model-free and continuous states setting, since the constraints come from exploration and their combination typically results in non-admissible ones \cite{AndreaSynthesisBellmanIneq}. 

To the best of our knowledge, the only works to consider boundedness conditions in model-free and continuous states setting, without using regularizers, are \cite{pazis2011nonparametric_alp}, \cite{CDC22_meyn} and \cite{CDC23_meyn_stoch}. The authors in \cite{pazis2011nonparametric_alp} propose a non-parametric version of the LP with an additional Lipschitz bound which, on one hand, alleviates the boundedness problem but, on the other, requires a number of constraints that scales quadratically with the number of samples. In \cite{CDC22_meyn} and \cite{CDC23_meyn_stoch}, though in finite action spaces, they provide a sufficient condition based on the rank of a data-based covariance matrix. Although providing useful analysis on data excitation, this condition does not come with an explicit method to design the LP parameters or to sample the constraints accordingly. The authors in \cite{LuciaBounded} tackle the problem for the class of LPs associated to a policy, that is, the LPs whose solution is the cost associated to a fixed policy. {\review Inspired by \cite{LuciaBounded},} we are interested in boundedness conditions for the \textit{optimal} LP formulation, that is, the LP whose solution is the optimal $Q$-function. Our contributions are focused on model-free and continuous state-action settings, and summarized as follows.}
\begin{itemize}
    \item[i.] We explicitly describe the cone of directions for which the LP is bounded as the conic hull of a data matrix which depends on the choice of basis functions;
    \item[ii.] In the case of polynomial basis functions, we propose a \textit{moment-matching} method to design the cost vector based on the observed data;
    \item[iii.] We provide high-dimensional simulations with minimal data to ensure that, besides guaranteeing finite solutions, our method retains {\review satisfactory} performance both for linear and nonlinear systems.
\end{itemize}

\section{EXACT LINEAR PROGRAMS}

Consider a discrete-time dynamical system 
\begin{equation}
 x^+ = f(x,u),    
\end{equation}
with possibly infinite state and action spaces $x \in \mathbf{X} \subseteq \mathbb{R}^{n}$ and $u \in \mathbf{U} \subseteq \mathbb{R}^{m}$, where $f : \mathbf{X} \times \mathbf{U} \rightarrow \mathbf{X}$ is the map encoding the dynamics.	
We work with \textit{stationary feedback policies}, given by functions $\pi : \mathbf{X} \rightarrow \mathbf{U}$. A nonnegative cost is associated to each state-action pair through the \textit{stage cost} function $\ell : \mathbf{X} \times \mathbf{U} \rightarrow \mathbb{R}_{+}$. We introduce a \textit{discount factor} $\gamma \in (0,1)$ and consider the infinite-horizon cost associated to a policy,
	$v_{\pi}(x) = \sum_{k=0}^{\infty}\gamma^{k}\ell(x_k,\pi(x_k))$, $x_0 = x$.
The objective of the optimal control problem is to find an optimal policy $\pi^{\star}$ such that $v_{\pi^{\star}}(x) = \inf_{\pi} v_{\pi}(x) = v^{\star}(x)$, where $v^\star$ is known as the optimal \textit{value function}. {\review We work under Assumptions 4.2.1 and 4.2.2 in \cite{LasserreDTMCP} to ensure the problem is well-posed.} For the sake of notation, we denote the state-action pair $z=(x,u)$ and $\Zb = \Xb \times \Ub$. Moreover, we denote by $\mathcal{S}(\Zb)$ the vector space of real-valued measurable functions with finite weighted sup-norm \cite[Lemma 2.20]{LasserreDTMCP}, and by $\mathcal{M}_+(\Zb)$ the set of finite non-negative measures on $\Zb$ with finite weighted total variation such that $c(\Zb)>0$. 

In the context of model-free control, working directly with value functions can be difficult because extracting $\pi^\star$ from $v^\star$ is not possible if the dynamics $f$ or the stage cost $\ell$ are not known \cite{GoranADP}. This difficulty can be addressed by introducing the $Q$-function \cite{WatkinsQLearning} associated to a policy $\pi$ as
\begin{align*}
	q_{\pi}(z) & = \ell(z) + \gamma v_{\pi}(f(z)) 
	= \ell(z) + \gamma q_{\pi}(f(z),\pi(f(z))).
\end{align*}
This can be interpreted as the cost of applying control input $u$ at state $x$, and following policy $\pi$ thereafter. The optimal $Q$-function is expressed by
\begin{align*}
	q^{\star}(z) & = \underbrace{\ell(z) + \gamma \inf_{w\in \mathbf{U}}q_{\pi^\star}(f(z),w)}_{\mathcal{F}q^{\star}(z)},
\end{align*}
where the operator $\F : \mathcal{S}(\Zb) \to \mathcal{S}(\Zb)$ is the \textit{Bellman operator} for $Q$-functions, it is \textit{monotone} and \textit{$\gamma$-contractive} \cite{BertsekasVol2}, and therefore admits  a unique fixed point $q^\star$.
The link between $v^\star$ and $q^\star$ is then given by $v^\star(x) = \inf_{u\in \mathbf{U}}q^\star(x,u)$.
The advantage of the $Q$-function reformulation is that the computation of the policy does not require $f$ or $\ell$, as
\begin{equation} \label{policy_extraction}
	\pi^\star(x) = \arg \min_{u\in\mathbf{U}}q^\star(x,u).
\end{equation}

Focusing now on $\F$, we observe that the Bellman inequality $q \le \F q$ implies $q \le q^{\star}$. It is then natural to look for the greatest $q \in \mathcal{S}(\Zb)$ that satisfies $q \le \F q$,
\begin{equation}\label{NonlinearProgramQfunctions}
	\begin{aligned}
		\sup_{q\in\mathcal{S}} \;\; & \int_{\Zb} q(z)c(dz) \\
		\mbox{s.t.} \;\; & q(z) \le \mathcal{F}q(z) \quad \forall z \in \Zb.
	\end{aligned}
\end{equation}
Then, if $c\in\mathcal{M}_+(\Zb)$, the solution of \eqref{NonlinearProgramQfunctions} coincides with $q^\star$ for $c$-almost all $z \in \Zb$ \cite{LasserreDTMCP,GoranADP}. Although $\F$ is a nonlinear operator, as discussed for instance in \cite{PaulADP} and \cite{GoranADP}, it is possible to reformulate \eqref{NonlinearProgramQfunctions} as an equivalent LP by dropping the infimum in $\F$, obtaining
\begin{equation}\label{LPdeterministic}
	\begin{aligned}
		\sup_{q\in \mathcal{S}} \;\; & \int_{\Zb} q(z)c(dz) \\
		\mbox{s.t.} \;\; & q(z) \le \ell(z) + \gamma q(f(z),w) \quad \forall(z,w) \in \Zb\times\mathbf{U}.
	\end{aligned}
\end{equation}
Because of the relaxation of the constraint set, the feasible region now includes an auxiliary input $w\in\mathbb{R}^m$. In general, however, the obtained formulation is infinite dimensional, and it is not solvable directly due to several sources of intractability which are collectively referred to as the \textit{curse of dimensionality}, see e.g. \cite{PaulADP} and \cite{WangIteratedBellInequalities}. {\review Note that, in case of stochastic systems, reformulating the LP as in~\eqref{LPdeterministic} is not trivial \cite{CogillDecentralizedADP, RelaxedBellmanOp}.}

\section{DATA-DRIVEN LINEAR PROGRAMS}

\subsection{Approximate Linear Program from Data}

Computing the $Q$-function by solving the LP \eqref{LPdeterministic} is in general computationally intractable because i) the optimization variable $q(z)$ lies in the infinite dimensional space $\Sc (\Zb)$, and ii) the number of constraints is infinite since the inequalities need to be satisfied for all $(z,w) \in \Zb \times \Ub$. To overcome these sources of intractability,  the original infinite problem is approximated by a finite one \cite{CogillDecentralizedADP, PaulADP, deFariasConstraintSampling}. 

As customary in the LP framework (see e.g. \cite{deFariasLPapproach}), one can restrict $q$ to the span of a finite family of basis functions $\phi_i : \Zb \to \mathbb{R}$ for $i\in \{1,\ldots,r\}$ as
\begin{equation} \label{basis_functions}
	\Sc^{\phi} (\Zb) =  \left \{ q\in\Sc : q(z)=\alpha^\top \phi (z) \right\},
\end{equation} 
where $\phi(z) = \begin{bmatrix} \phi_1(z) & \cdots & \phi_r(z) \end{bmatrix}^\top \in \mathbb{R}^r$, and $\alpha \in \mathbb{R}^r$ takes the role of the finite-dimensional optimization variable. Indeed, substituting $ \Sc $ with $ \Sc^{\phi}$ in \eqref{LPdeterministic} leads to a semi-infinite approximation of the original problem.
Important classes of approximation for $Q$-functions are based on radial basis functions and polynomials \cite{SuttonRLanIntroduction, BertsekasVol2, SummersSumofSquares}. To make sure that the calculation of the associated policy \eqref{policy_extraction} is tractable, $q(x,u)$ is often selected to be convex in $u$.

For the infinite number of constraints, if the dynamics $f$ and the cost $\ell$ are available or have a special structure (\textit{e.g.}, quadratic functions), and the features vector $\phi$ is selected appropriately, the constraint set can sometimes be represented exactly or relaxed via the $\Sc$-procedure or sum-of-squares \cite{WangIteratedBellInequalities, SummersSumofSquares}.
However, if the dynamics are unknown, these tools cannot be used to approximate the feasible set. 
In this case, the constraints can be relaxed via sampling \cite{deFariasConstraintSampling}.
If we measure the current state $x_i \in \Xb$, apply the input $u_i \in \Ub$, observe the transition $x_i^+ = f(x_i, u_i)$ and the incurred cost $\ell_i=\ell(x_i, u_i)$, we obtain a set of data tuples $(x_i,u_i,x_i^+,\ell_i)$. Note that we use index $i$ instead of $t$ to emphasize that data tuples do not have to be consecutive in time; for convenience, we denote $z_i=(x_i,u_i)$ and $z_i^+=(x_i^+,w_i)$. In the spirit of, \textit{e.g.}, \cite{GoranADP} and \cite{RelaxedBellmanOp}, the resulting LP that approximates $q^\star$ can be interpreted as solving a reinforcement learning problem, where the goal is to learn a policy without knowing the dynamics and cost function. 

By substituting $\mathcal{S}$ with $\mathcal{S}^\phi$ and sampling $N$ constraints, we obtain the following data-driven finite-dimensional approximation of \eqref{LPdeterministic},
\begin{equation} \label{data-driven_LP} 
\begin{aligned} 
	 \max_{\alpha \in \mathbb{R}^r}  \;\; & \alpha^\top \int_{\Zb} \phi (z) c(dz) \\
	 \text{s.t.} \;\; & \bold\Phi ^\top \alpha \leq \boldsymbol\ell\,,
\end{aligned} 
\end{equation} 
where $\bold\Phi=\begin{bmatrix} \phi(z_1)-\gamma\phi(z_1^+) & \cdots & \phi(z_N)-\gamma\phi(z_N^+) \end{bmatrix}$ is an $r\times N$ data matrix containing information on the observed trajectories of the system, and $\boldsymbol\ell=\begin{bmatrix}
    \ell_1 & \cdots & \ell_N \end{bmatrix}^\top \in \mathbb{R}^N$ collects the incurred costs.
    
\subsection{Practical Considerations and Difficulties}

A relevant consideration is that, while for any $c\in\mathcal{M}_+$ solving problem \eqref{LPdeterministic} yields $q^\star$ $c$-almost everywhere, this is no longer the case for the approximate LP \eqref{data-driven_LP}. In this case, the choice of $c$ may have a significant impact on the quality of the resulting approximation. The choice of $c$ can be interpreted as allocating approximation quality over the state-action space \cite{deFariasLPapproach, PaulADP}. In the LP literature, $c$ is typically selected to be a {\review fixed} probability measure, \textit{e.g.}, if the state-action space is unbounded one can use a Gaussian distribution, or if it is compact a uniform distribution {\review \cite{deFariasLPapproach,CDC22_meyn,pazis2011nonparametric_alp, CogillDecentralizedADP}}. The authors in \cite{PaulADP} propose solving the approximate LP problem for multiple realizations of $c$ and then computing a point-wise maximum over all derived solutions. However, in general, there are no guidelines on how to select $c$. 

Arguably the most important issue related to the approximate LP, that perhaps has hampered the deployment of this method, is the difficulty of obtaining bounded solutions, especially as the state and input dimensions grow \cite{RelaxedBellmanOp,LuciaBounded}.
Under ergodicity assumptions, the authors in \cite{CDC22_meyn} state in Theorem 2.2 that, if the covariance matrix associated with the basis functions $\Sigma \coloneq  ( \lim_{N \to \infty} \frac{1}{N} \sum_{i=1}^{N} \phi(z_i) \phi(z_i)^\top ) - ( \lim_{N \to \infty} \frac{1}{N} \sum_{i=1}^{N} \phi(z_i) ) ( \lim_{N \to \infty} \frac{1}{N} \sum_{i=1}^{N} \phi(z_i) )^\top$ is full rank (corresponding to a persistence-of-excitation condition), then the constraint region of \eqref{data-driven_LP} is bounded for $N$ sufficiently large.
While this result provides useful insight into the role of data richness, it does not directly provide a practical procedure to guarantee boundedness in finite-sample settings. Indeed, {\review since the number of constraints $N$ needed to obtain a bounded solution quickly becomes prohibitive as the state-action space grows, the constraint region may remain unbounded for any reasonable amount of constraints even when $\Sigma$ is full rank.}

Our goal is to derive explicit conditions that guarantee a bounded optimal value of the LP based on the observed data. Importantly, these conditions do not require the feasible region itself to be bounded; instead, they characterize objective directions for which the LP admits a finite solution even when the feasible set is unbounded. In the next section, we complement the results in \cite{CDC22_meyn} by providing a systematic method to design the measure $c$ from data to ensure this property. 

\section{BOUNDEDNESS GUARANTEES}

\subsection{General Features}

The following result describes the cone of directions for which the data-driven LP is bounded.

\begin{thm} \label{mainthm}
	The solution to \eqref{data-driven_LP} is finite if and only if there exists $\lambda\in\mathbb{R}^{N}_+$ such that
	\begin{equation} \label{bounded_condition}
	\int_{\Zb} \phi (z) c(dz) = \bold\Phi \lambda\,.
	\end{equation}
\end{thm}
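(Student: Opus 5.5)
The plan is to use finite-dimensional LP duality, which is exact for polyhedral problems. Write the cost vector as $\bar c := \int_{\Zb}\phi(z)\,c(dz)\in\mathbb{R}^r$, so that \eqref{data-driven_LP} becomes $\max\{\bar c^\top\alpha : \bold\Phi^\top\alpha\le\boldsymbol\ell\}$. Its Lagrangian dual is
\begin{equation*}
\min_{\lambda\in\mathbb{R}^N}\ \boldsymbol\ell^\top\lambda \quad\text{s.t.}\quad \bold\Phi\lambda=\bar c,\ \lambda\ge 0 .
\end{equation*}
Condition \eqref{bounded_condition} is exactly dual feasibility. The theorem then reduces to the standard fact that, when the primal is feasible, it has a finite optimum if and only if the dual is feasible.

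The first step is to establish primal feasibility. Since $\ell$ is nonnegative, $\boldsymbol\ell\ge 0$, so $\alpha=0$ satisfies $\bold\Phi^\top 0=0\le\boldsymbol\ell$. The feasible set is therefore a nonempty polyhedron, and "the solution is finite" has no degenerate infeasible case (value $-\infty$) to handle.

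For sufficiency, assume $\lambda\ge 0$ satisfies \eqref{bounded_condition}. For any feasible $\alpha$, weak duality gives
\begin{equation*}
\bar c^\top\alpha=\lambda^\top\bold\Phi^\top\alpha\le\lambda^\top\boldsymbol\ell .
\end{equation*}
The objective is thus bounded above on a nonempty polyhedron. A linear function bounded above on a nonempty polyhedron attains its supremum, so the maximum is finite and attained.

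For necessity, I would argue by contraposition using Farkas' lemma. Suppose no $\lambda\ge 0$ satisfies $\bold\Phi\lambda=\bar c$, that is, $\bar c\notin\operatorname{cone}\{\phi(z_i)-\gamma\phi(z_i^+)\}$. Farkas' lemma then provides $d\in\mathbb{R}^r$ with $\bold\Phi^\top d\le 0$ and $\bar c^\top d>0$. Then $\alpha=t d$ is feasible for every $t\ge0$, because $\bold\Phi^\top(td)\le 0\le\boldsymbol\ell$, while $\bar c^\top(td)\to\infty$. Hence the LP is unbounded. The only delicate points are choosing the correct Farkas variant (the separation of a point from a finitely generated, hence closed, convex cone) and using $\boldsymbol\ell\ge0$, or more generally feasibility of some $\alpha_0$, so that the recession direction yields feasible points. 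I expect no substantive obstacle beyond this. I would also remark that \eqref{bounded_condition} says $\bar c$ lies in the conic hull of the columns of $\bold\Phi$, which is contribution i.
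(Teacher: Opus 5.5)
Your proof is correct and follows the same route as the paper: write the dual of \eqref{data-driven_LP}, observe that $\alpha=0$ is primal feasible because $\boldsymbol\ell\ge 0$, and conclude that the primal is bounded exactly when the dual is feasible. The paper just cites LP duality for that equivalence, whereas you prove it directly, using weak duality for sufficiency and a Farkas recession direction for necessity.
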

\vspace{0.4cm}
\begin{proof}
	The dual LP associated to \eqref{data-driven_LP} is \cite[Ch.6]{matouvsek2007understanding} 
	\begin{equation} \label{dual_LP}
		\begin{aligned} 
			\min_{\lambda \in \mathbb{R}^N} \;\; & \sum_{i=1}^{N} \lambda_i \ell(z_i) \\
			\text{s.t.} \;\; & \bold\Phi \lambda = \int_{\Zb} \phi (z) c(dz), \quad \lambda \ge 0.
		\end{aligned} 
	\end{equation} 
Note that $\alpha=0$ is always a feasible solution to the primal LP \eqref{data-driven_LP}. Hence, by duality theory \cite[Ch.6]{matouvsek2007understanding}, the primal LP \eqref{data-driven_LP} is bounded if and only if the dual LP \eqref{dual_LP} is feasible.
\end{proof}

Condition \eqref{bounded_condition} has an important geometric interpretation in terms of the direction of growth of the LP: \eqref{data-driven_LP} is bounded if and only if the cost vector $\int_{\Zb} \phi (z) c(dz)$ lies in the conic hull of the columns of the data matrix $\bold\Phi$. Since $c$ is typically selected to be a fixed probability distribution, once $c$ and the basis functions $\phi$ are designed, then also the vector $\int_{\Zb} \phi (z) c(dz)$ is fixed. In principle, adding more and more data increases the number of columns of $\bold\Phi$, and could eventually lead to \eqref{bounded_condition} being satisfied. Unfortunately, in practice, this condition is typically difficult to satisfy even for large datasets. {\review The solution proposed in \cite{LuciaBounded} for the LP associated to the $Q$-function of a fixed policy in the LQR setting involves designing $c$ to satisfy \eqref{bounded_condition}. The context of policy evaluation and LQR design considered in \cite{LuciaBounded} allows for a simplified analysis, since the Bellman operator is linear in $q$ and the constraint set is a cone. In the next section, we extend this approach to the LP class associated to the optimal $Q$-function \eqref{NonlinearProgramQfunctions} (and its data-driven approximation \eqref{data-driven_LP}) for general nonlinear systems and polynomial features.}

\subsection{Polynomial Features}

Polynomial basis functions are a common choice to approximate the value/$Q$-function, offering a compromise between approximation quality and parametrization complexity \cite{SuttonRLanIntroduction, SummersSumofSquares}. Note that any polynomial of degree $\le 2d$ can be represented as a quadratic form of monomials \cite{LasserrePoly}, as 
\begin{equation*}
    q(z) = p(z)^\top Qp(z)\,,
\end{equation*}
where $p(z)$ is the complete vector of monomials in $z$ of degree at most $d$, and $Q$ is a symmetric matrix. By using the fact that $p(z)^\top Q p(z) = (\vect Q)^\top \vect(p(z)p(z)^\top)$, we can define the space of polynomial features according to the notation introduced in \eqref{basis_functions} as
\begin{equation*}
	\begin{aligned}
		\Sc^{\text{poly}} (\Zb) =  \{ q\in\Sc : q(z) = (\underbrace{\vect Q}_{\alpha})^\top \underbrace{\vect(p(z)p(z)^\top)}_{\phi(z)} \}.
	\end{aligned}
\end{equation*} 
Note that $p(z) \in \mathbb{R}^\rho$, where $\rho=\binom{n+m+d}{d}$, and because $Q$ is symmetric, the number of parameters needed to represent the $Q$-function is $r=\rho(\rho+1)/2$.

Specializing \eqref{bounded_condition} to the polynomial case, \textit{i.e.}, imposing $\phi(z)=\vect(p(z)p(z)^\top)$, leads to the following result. 
\begin{prop}\label{prop1}
    If $q\in\Sc^{\text{poly}}$, then the solution to \eqref{data-driven_LP} is finite if and only if there exists $\lambda\in\mathbb{R}^{N}_+$ such that
	\begin{equation} \label{nec_condition}
    \int_{\Zb} p(z)p(z)^\top c(dz) = \mat (\bold \Phi \lambda)\,, 
	\end{equation}
    where $\mat (\bold \Phi \lambda)=\sum_{i=1}^{N} \lambda_i (p(z_i)p(z_i)^\top -\gamma p(z_i^+)p(z_i^+)^\top)$
\end{prop}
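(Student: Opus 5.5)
Proposition~\ref{prop1} is Theorem~\ref{mainthm} specialized to $\phi(z)=\vect(p(z)p(z)^\top)$, combined with the fact that $\vect$ is a linear bijection between $\rho\times\rho$ matrices and $\mathbb{R}^{\rho^2}$ with inverse $\mat$. Applying Theorem~\ref{mainthm} directly, the solution to \eqref{data-driven_LP} is finite if and only if there is $\lambda\in\mathbb{R}^N_+$ with
\begin{equation*}
\int_{\Zb}\vect\bigl(p(z)p(z)^\top\bigr)\,c(dz)=\bold\Phi\lambda .
\end{equation*}
The plan is to apply $\mat$ to both sides and simplify each side separately.

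For the left-hand side, integration is entrywise and $\vect$ only rearranges entries. Hence $\int_{\Zb}\vect(p(z)p(z)^\top)c(dz)=\vect\int_{\Zb}p(z)p(z)^\top c(dz)$, provided each moment $\int_{\Zb} p_j(z)p_k(z)\,c(dz)$ is finite. That is exactly what is needed for the cost vector of \eqref{data-driven_LP} to be well defined, so I will assume it as part of the standing setup (as in Theorem~\ref{mainthm}).

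For the right-hand side, the $i$-th column of $\bold\Phi$ is
\begin{equation*}
\phi(z_i)-\gamma\phi(z_i^+)=\vect\bigl(p(z_i)p(z_i)^\top-\gamma p(z_i^+)p(z_i^+)^\top\bigr).
\end{equation*}
By linearity of $\vect$, $\bold\Phi\lambda=\vect\sum_{i=1}^N\lambda_i\bigl(p(z_i)p(z_i)^\top-\gamma p(z_i^+)p(z_i^+)^\top\bigr)$, which gives the stated formula for $\mat(\bold\Phi\lambda)$. Since $\vect$ is injective, the vector equation is equivalent to the matrix equation \eqref{nec_condition}, and this proves both directions at once.

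The only delicate point is the parametrization. The paper uses $r=\rho(\rho+1)/2$ parameters because $Q$ is symmetric, while $\vect$ has $\rho^2$ entries. There are two cases:
\begin{itemize}
\item If $\alpha=\vect Q$ ranges over all of $\mathbb{R}^{\rho^2}$, the argument above applies verbatim.
\item If $\alpha$ is restricted to $\vect$ of symmetric matrices, I will use the half-vectorization (with off-diagonal entries doubled) in both $\phi$ and $\alpha$. The moment matrix and every data matrix $p p^\top-\gamma p^+p^{+\top}$ are symmetric, so the half-vectorization is a bijection on the symmetric matrices involved, and the same equivalence holds.
\end{itemize}
Alternatively, since all matrices on both sides of \eqref{nec_condition} are symmetric, the condition is unchanged whichever convention is used. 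One could also note that the antisymmetric directions contribute zero to both the objective and the constraints, so they do not affect boundedness.
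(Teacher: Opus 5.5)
Your proposal is correct and matches the paper, which obtains Proposition~\ref{prop1} by specializing Theorem~\ref{mainthm} to $\phi(z)=\vect(p(z)p(z)^\top)$ and reading the resulting vector identity in matrix form via linearity and injectivity of $\vect$. Your extra discussion of the symmetric parametrization ($r=\rho(\rho+1)/2$ versus $\rho^2$) usefully clarifies a point the paper glosses over, and the off-diagonal scaling convention does not matter: any injective linear map $T$ on symmetric matrices with $\phi(z)=T(p(z)p(z)^\top)$ gives the same matrix condition.
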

is the matricial version of $\bold \Phi \lambda$.

For a fixed $\lambda\ge0$, Proposition \ref{prop1} requires the existence of a non-negative measure $c$ that satisfies \eqref{nec_condition}. This is an established problem in measure theory, known as the \textit{moment problem,}\footnote{Given a sequence of moments, the moment problem seeks to establish conditions under which there exists a measure with that exact moment sequence. Equivalently, given a real square matrix, conditions under which such matrix is effectively a moment matrix associated to a measure \cite{LasserrePoly}. \review{Numerous problems can be viewed as particular instances of the moment problem, resulting in successful applications in the areas of optimization, statistics, and control \cite[Part II]{LasserrePoly}}.} where the term $\int_{\Zb} p(z)p(z)^\top c(dz)$ is precisely the definition of \textit{moment matrix} associated to the measure $c$ \cite[Sec. 3.2]{LasserrePoly}. Establishing whether a given matrix $\mat(\bold\Phi \lambda)$ is a moment matrix for some measure $c$ is difficult, in general. A necessary condition is that $\mat(\bold\Phi \lambda)\succeq0$ but, to the best of the authors' knowledge, there is no tractable necessary and sufficient condition that solves the truncated and multivariate moment problem under study here. The only case for which $\mat(\bold\Phi \lambda)\succeq0$ is also sufficient is for polynomials of degree at most $2$ \cite{CurtoMomentProblem}, denoted here with $\Sc^{\text{quad}}\subset\Sc^{\text{poly}}$. In this case, the moment problem reduces to matching mean and covariance, which always admits a probability measure when the covariance matrix is positive semi-definite. 
\begin{prop}\label{prop2}
    If $q\in\Sc^{\text{quad}}$, then the solution to \eqref{data-driven_LP} is finite if and only if there exists $\lambda\in\mathbb{R}^{N}_+\setminus \{0\}$ such that
	\begin{equation} \label{iffcond}
    \mat (\bold \Phi \lambda) \succeq 0\,. 
	\end{equation}
    In that case, \eqref{data-driven_LP} is guaranteed to be bounded by selecting $\int_{\Zb} \phi (z) c(dz) = \bold \Phi \lambda$ as the cost vector.
\end{prop}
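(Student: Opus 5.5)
Throughout, I read the statement as asserting that a measure $c\in\mathcal{M}_+(\Zb)$ making \eqref{data-driven_LP} finite exists if and only if \eqref{iffcond} holds for some $\lambda\in\mathbb{R}^N_+\setminus\{0\}$. The plan is to reduce everything to Proposition \ref{prop1}, which is just Theorem \ref{mainthm} with $\phi(z)=\vect(p(z)p(z)^\top)$. It then suffices to show that, in the quadratic case, a matrix is a moment matrix of some $c\in\mathcal{M}_+(\Zb)$ exactly when it is positive semidefinite with strictly positive $(1,1)$ entry. Here $d=1$, so $p(z)=\begin{bmatrix}1 & z^\top\end{bmatrix}^\top$ and moment matrices have the block form
\begin{equation*}
\int_{\Zb} p(z)p(z)^\top c(dz)=\begin{bmatrix} c(\Zb) & \int z^\top c(dz)\\ \int z\, c(dz) & \int zz^\top c(dz)\end{bmatrix}.
\end{equation*}

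\emph{Necessity.} Suppose the LP is finite for some admissible $c$. Proposition \ref{prop1} gives $\lambda\ge 0$ with $\mat(\bold\Phi\lambda)=\int p p^\top dc$. This matrix is PSD, since $v^\top(\int pp^\top dc)v=\int (v^\top p)^2dc\ge0$. Its $(1,1)$ entry is $c(\Zb)>0$, so $\mat(\bold\Phi\lambda)\neq 0$, which forces $\lambda\neq0$.

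\emph{Sufficiency.} Let $\lambda\ge0$, $\lambda\ne0$, with $M:=\mat(\bold\Phi\lambda)\succeq0$. The key observation is that the first monomial is constant, so the $(1,1)$ entry of $M$ is $\sum_i\lambda_i(1-\gamma)$. Because $\gamma<1$ and $\lambda\neq0$, this is strictly positive; this is exactly where the hypothesis $\lambda\ne0$ is used. Write $m:=M_{11}>0$, $\mu:=M_{2:,1}/m$, and $\Sigma:=M_{2:,2:}/m-\mu\mu^\top$. Then $\Sigma\succeq0$ by the Schur complement of the positive pivot $m$ in $M\succeq0$. Now take $c:=m\cdot\mathcal{N}(\mu,\Sigma)$, a possibly degenerate Gaussian. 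It is a finite nonnegative measure with $c(\Zb)=m>0$, mean $\mu$ and second moment $\Sigma+\mu\mu^\top$, so $\int pp^\top dc=M$. Vectorizing gives $\int\phi\,dc=\bold\Phi\lambda$, and Theorem \ref{mainthm} yields finiteness. This also proves the final sentence of the proposition: the cost vector $\bold\Phi\lambda$ lies in the conic hull of the columns of $\bold\Phi$ by construction, so Theorem \ref{mainthm} applies directly.

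The main obstacle is admissibility of the constructed $c$. Gaussians have all moments, so finite weighted total variation is plausible for polynomially weighted norms, but I expect to need to state the weight assumption explicitly. More seriously, the support of $c$ must lie in $\Zb$. When $\Zb=\mathbb{R}^{n+m}$ the Gaussian works. For a general $\Zb$ I would try replacing the Gaussian by a finitely atomic measure matching $(m,\mu,\Sigma)$, in the spirit of Carath\'eodory/Tchakaloff and following \cite{CurtoMomentProblem}, but the atoms need not lie in $\Zb$. For this reason I would state the sufficiency direction under $\Zb=\mathbb{R}^{n+m}$, or under a convexity or unboundedness assumption ensuring that mean and covariance are realizable on $\Zb$, and treat this as the delicate point.
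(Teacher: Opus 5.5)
Your proof is correct and follows the paper's own route. You reduce to Proposition~\ref{prop1}, get necessity from positive semidefiniteness of moment matrices, and get sufficiency by matching mean and covariance, which the paper simply asserts via \cite{CurtoMomentProblem}. Your observation $\mat(\bold\Phi\lambda)_{11}=(1-\gamma)\sum_i\lambda_i>0$ also makes explicit the normalization and the role of $\lambda\neq0$, both of which the paper leaves unstated.

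The support caveat you raise is a real limitation of the statement, not just a technicality. Take scalar $z\in\Zb=[0,1]$, $\gamma=0.9$, and transitions $0\to0.5$ and $1\to0.99$ with weights $0.01$ and $1$. Then $\mat(\bold\Phi\lambda)$ is positive definite, but its normalized mean is $\approx1.035\notin\Zb$. Moreover, both columns have negative localizing value $\int z(1-z)\,c(dz)$, so no nonzero $\lambda\ge0$ gives a moment matrix of a measure on $[0,1]$. Hence the ``if'' direction needs $\Zb=\mathbb{R}^{n+m}$, as the paper tacitly assumes. The last sentence of the proposition, about the cost vector $\bold\Phi\lambda$, holds for any $\Zb$ by weak duality.
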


For quadratic features, Proposition \ref{prop2} fully characterizes the cone of directions for which \eqref{data-driven_LP} is bounded, in terms of the data $\bold \Phi$ and without explicitly computing the measure $c$. More generally, for polynomials of degree $\le d$, let us introduce an $r\times M$ auxiliary data matrix $\bold\Phi_A=\begin{bmatrix}
    \phi(y_1) & \cdots & \phi(y_M)
\end{bmatrix}$, where $y_1,\ldots,y_M\in\Zb$ are vectors in the state-action space sampled according to a given distribution. 
\begin{prop} \label{proppoly}
    If $q\in\Sc^{\text{poly}}$, then the solution to \eqref{data-driven_LP} is finite if there exist $\lambda\in\mathbb{R}^{N}_+\setminus \{0\}$ and $\mu\in\mathbb{R}^{M}_+\setminus \{0\}$ such that
    \begin{equation} \label{ifcond}
        \mat (\bold \Phi \lambda) = \mat (\bold \Phi_A \mu)\,,
    \end{equation}
    where $\mat (\bold \Phi_A \mu)=\sum_{i=1}^{M} \mu_i p(y_i)p(y_i)^\top$.
    In that case, \eqref{data-driven_LP} is guaranteed to be bounded by selecting $\int_{\Zb} \phi (z) c(dz) = \bold \Phi \lambda$ as the cost vector (equivalently, $\bold \Phi_A \mu$).
\end{prop}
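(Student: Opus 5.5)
The plan is to reduce Proposition \ref{proppoly} to Theorem \ref{mainthm}, or equivalently to Proposition \ref{prop1}, by building an explicit non-negative measure $c$ whose moment matrix equals $\mat(\bold\Phi\lambda)$. The auxiliary matrix $\bold\Phi_A$ is designed exactly for this: each column $\phi(y_j)$ is the vectorized moment matrix of a Dirac mass at $y_j$. Fix $\lambda\in\mathbb{R}^N_+\setminus\{0\}$ and $\mu\in\mathbb{R}^M_+\setminus\{0\}$ satisfying \eqref{ifcond}, and define the atomic measure
\begin{equation*}
c \coloneq \sum_{j=1}^{M}\mu_j\,\delta_{y_j}.
\end{equation*}

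\textbf{Step 1: check that $c\in\mathcal{M}_+(\Zb)$.} Each $y_j$ lies in $\Zb$ and every $\mu_j\ge 0$, so $c$ is non-negative. It is finite, with total mass $\sum_j\mu_j$. This mass is strictly positive because $\mu\neq 0$, which gives $c(\Zb)>0$. A finite combination of Dirac masses also has finite weighted total variation, since the weight function is finite at each $y_j$. This is the only place where $\mu\neq0$ is used.

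\textbf{Step 2: compute the moments of $c$.} By definition of the Dirac measure,
\begin{equation*}
\int_{\Zb}\phi(z)\,c(dz)=\sum_{j=1}^M\mu_j\phi(y_j)=\bold\Phi_A\mu,
\end{equation*}
or in matrix form $\int_{\Zb}p(z)p(z)^\top c(dz)=\sum_j\mu_j p(y_j)p(y_j)^\top=\mat(\bold\Phi_A\mu)$.

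\textbf{Step 3: match with the data.} By \eqref{ifcond}, $\mat(\bold\Phi_A\mu)=\mat(\bold\Phi\lambda)$. The map $\mat$ is the inverse of $\vect$ on symmetric matrices, so it is injective on the range of $\phi$, which consists of vectorizations of symmetric matrices. Hence $\bold\Phi_A\mu=\bold\Phi\lambda$. Combining with Step 2 gives $\int_{\Zb}\phi\,dc=\bold\Phi\lambda$ with $\lambda\ge0$. This is exactly \eqref{bounded_condition}, or \eqref{nec_condition} in the form of Proposition \ref{prop1}.

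\textbf{Step 4: conclude.} Theorem \ref{mainthm} (the dual LP is feasible, and $\alpha=0$ is primal feasible) now gives that \eqref{data-driven_LP} has a finite optimum. The same argument shows that choosing the cost vector $\bold\Phi\lambda=\bold\Phi_A\mu$ yields a bounded LP, and that this cost vector is realizable by the measure $c$ above.

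I expect no real obstacle here. The main subtlety is bookkeeping: the identification between $\vect$ and $\mat$ depends on the convention used for symmetric matrices, whether full $\rho^2$ vectorization or $r=\rho(\rho+1)/2$ half-vectorization. The argument only needs $\phi(z)$ and $p(z)p(z)^\top$ to determine each other linearly, so that the matrix equality \eqref{ifcond} is equivalent to the vector equality $\bold\Phi\lambda=\bold\Phi_A\mu$. The second point to verify is admissibility of $c$ in $\mathcal{M}_+$. Note that sufficiency is only claimed in one direction, since restricting $c$ to atoms at the prescribed $y_j$ discards other measures; no converse is needed.
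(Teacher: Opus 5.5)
Your proposal is correct and is essentially the paper's proof. Both take the atomic measure $c=\sum_{j=1}^M \mu_j\delta_{y_j}$, note that by \eqref{ifcond} its moment matrix $\sum_j \mu_j p(y_j)p(y_j)^\top$ matches the data-side matrix in \eqref{nec_condition}, and conclude via Proposition~\ref{prop1} (equivalently Theorem~\ref{mainthm}); your extra checks that $c\in\mathcal{M}_+(\Zb)$ and that the matrix identity lifts to equality of the cost vectors are bookkeeping the paper leaves implicit.
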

\begin{proof}
    Consider a pair $\lambda,\mu$ for which \eqref{ifcond} holds. Then, \eqref{nec_condition} reads $\int_{\Zb} p(z)p(z)^\top c(dz)=\sum_{i=1}^{M} \mu_i p(y_i)p(y_i)^\top$ which is satisfied, for instance, by the measure $c=\sum_{i=1}^M \mu_i \delta_{y_i}$, where $\delta_{y_i}$ is the Dirac measure at $y_i$.
\end{proof}
Proposition \ref{proppoly} allows one to compute cost vectors that lead to finite solutions for \eqref{data-driven_LP} in the case of polynomial features, using only the observed data $\bold \Phi$ and without {\review having to perform the high-dimensional integration in \eqref{nec_condition}.} We refer to the computation of the cost vector in \eqref{data-driven_LP} via \eqref{ifcond} as \textit{moment-matching}. Although the condition is only sufficient, we show that, in practice, it consistently leads to a finite solution, even for large-scale systems with minimal data. Moreover, note that \eqref{ifcond} is a stronger requirement than \eqref{iffcond} since $\mat (\bold \Phi_A \mu)\succeq0$ by construction. 

The measure $c$ can be interpreted as weighting the approximation quality over the state-action space \cite{PaulADP}. In this sense, selecting the cost vector for \eqref{data-driven_LP} based on data according to Proposition \ref{proppoly} can be interpreted as selecting $c=\sum_{i=1}^M \mu_i \delta_{y_i}$, weighting approximation quality at the points $y_1,\ldots,y_M$ with relative weights $\mu_1,\ldots,\mu_M$. {\review In this sense, the choice of auxiliary points determines where the approximation quality is distributed over the state-action space, while the observed data allocate relative weights via \eqref{iffcond} and \eqref{ifcond}.}

Next, consider $\Sc^{\text{poly}}_{u^2}$, the class of polynomial functions $q(x,u)=\begin{bsmallmatrix}
    p(x) \\ u 
\end{bsmallmatrix}^\top Q \begin{bsmallmatrix}
    p(x) \\ u 
\end{bsmallmatrix}$ of degree up to $2d$ in $x$ and up to 2 in $u$, with
$Q = \begin{bsmallmatrix} Q_{xx} & Q_{xu} \\ Q_{xu}^\top  & Q_{uu}
\end{bsmallmatrix}$. 
Clearly,  $\Sc^{\text{quad}}\subset\Sc^{\text{poly}}_{u^2}\subset\Sc^{\text{poly}}$.
\begin{prop}
    If $q\in\Sc^{\text{poly}}_{u^2}$, and the optimal solution to LP \eqref{data-driven_LP} is such that $Q_{uu}\succ0$, then the optimal greedy policy \eqref{policy_extraction} is unique and has the closed-form expression
    \begin{equation} \label{closedformpolicy}
        \pi(x)={\review -}Q_{uu}^{-1}Q_{xu}^\top p(x).
    \end{equation}
\end{prop}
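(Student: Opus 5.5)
The plan is to expand the quadratic form in $u$ and minimize it directly. For fixed $x$, write the learned $Q$-function as
\begin{equation*}
q(x,u) = p(x)^\top Q_{xx} p(x) + 2\, p(x)^\top Q_{xu} u + u^\top Q_{uu} u,
\end{equation*}
using the block partition of the symmetric matrix $Q$ given in the definition of $\Sc^{\text{poly}}_{u^2}$. The symmetry of $Q$ (and hence of $Q_{uu}$) makes the two cross terms $p(x)^\top Q_{xu}u$ and $u^\top Q_{xu}^\top p(x)$ equal, which is what produces the factor $2$. As a function of $u$ alone, this is a quadratic polynomial whose Hessian is $2Q_{uu}$.

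Next, I will use the hypothesis $Q_{uu}\succ0$. It makes $u\mapsto q(x,u)$ strictly convex and coercive on $\mathbb{R}^m$. Coercivity gives existence of a minimizer, and strict convexity gives uniqueness. The minimizer is therefore characterized by the first-order condition
\begin{equation*}
\nabla_u q(x,u) = 2Q_{xu}^\top p(x) + 2Q_{uu}u = 0.
\end{equation*}
Since $Q_{uu}\succ0$ is invertible, this gives $u = -Q_{uu}^{-1}Q_{xu}^\top p(x)$, which is \eqref{closedformpolicy}.

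An equivalent route, which I would include as a check, is completing the square:
\begin{equation*}
q(x,u) = \bigl(u+Q_{uu}^{-1}Q_{xu}^\top p(x)\bigr)^\top Q_{uu}\bigl(u+Q_{uu}^{-1}Q_{xu}^\top p(x)\bigr) + p(x)^\top\bigl(Q_{xx}-Q_{xu}Q_{uu}^{-1}Q_{xu}^\top\bigr)p(x).
\end{equation*}
The first term is nonnegative and vanishes only at the claimed $u$, so the minimizer and its uniqueness can be read off directly.

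The main obstacle is the action set $\Ub$. The argument above minimizes over $u\in\mathbb{R}^m$, whereas \eqref{policy_extraction} takes the $\arg\min$ over $\Ub$. I would state that the result is meant for $\Ub=\mathbb{R}^m$, or more generally that it holds whenever the unconstrained minimizer lies in $\Ub$. Otherwise the greedy policy is the $Q_{uu}$-weighted projection of this point onto $\Ub$; it is still unique if $\Ub$ is convex, but it no longer has the stated closed form. A second, minor point is that $\alpha$ and $Q$ must be well defined, which is why the statement presupposes a finite optimal solution of \eqref{data-driven_LP}, guaranteed for instance via Proposition~\ref{proppoly}.
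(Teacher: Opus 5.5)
Your proposal is correct and follows essentially the same route as the paper: convexity in $u$ from $Q_{uu}\succ0$, the first-order condition $Q_{uu}u+Q_{xu}^\top p(x)=0$ (your factor $2$ cancels), and invertibility of $Q_{uu}$ for uniqueness; the completing-the-square check is a harmless addition. Your caveat about $\Ub$ is well taken. The paper's proof also minimizes over all of $\mathbb{R}^m$ by setting the gradient to zero, so the closed form holds only when $\Ub=\mathbb{R}^m$ or when the unconstrained minimizer lies in $\Ub$.
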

\vspace{0.15cm}
\begin{proof}
 Denote by $q(x,u)=\begin{bsmallmatrix}
    p(x) \\ u 
\end{bsmallmatrix}^\top Q \begin{bsmallmatrix}
    p(x) \\ u 
\end{bsmallmatrix}$ the solution to \eqref{data-driven_LP}. 
Since $Q_{uu}\succ0$ and thus $q$ is convex in $u$, the associated control policies $u=\pi(x)$ defined in \eqref{policy_extraction} are the solution set to $\tfrac{\partial q(x,u)}{\partial u}=0$, which is $Q_{uu}u + Q_{xu}^\top p(x)=0$. Because $Q_{uu}$ is invertible, the unique solution to the previous equation is \eqref{closedformpolicy}, concluding the proof.
\end{proof}

Although $\Sc^{\text{poly}}_{u^2}$ is a more restrictive choice than  $\Sc^{\text{poly}}$, it has the advantage of having a closed-form expression for the associated policy. {\review Before computing \eqref{closedformpolicy}, one has to check that $Q_{uu}\succ0$.} As a final note, for quadratic basis functions $q(x,u)= \begin{bsmallmatrix}
    x \\ u
\end{bsmallmatrix}^\top Q \begin{bsmallmatrix}
    x \\ u
\end{bsmallmatrix}$, one has $p(x)=x$, reducing \eqref{closedformpolicy} to the familiar class of linear policies $\pi(x)={\review -}Q_{uu}^{-1} Q_{xu}^\top x$. 

\section{NUMERICAL EXAMPLES}

\begin{figure*}[!t]
    \centering
    \begin{minipage}[t]{0.38\textwidth}
    \centering
\begin{tikzpicture}
\begin{axis}[
    width=\dimexpr\linewidth-1.15cm\relax,
    height=3.2cm,
    scale only axis,
    log basis x=10,
    xmin=230, xmax=2050,
    ymin=0, ymax=31.5,
    xlabel={Number of samples (N)},
    ylabel={State dimension (n)},
    title={\textbf{Boundedness Isolines}},
    xlabel style={font = \small, yshift=3pt},
    ylabel style={font = \small, yshift=-3pt},
    title style={font = \small, yshift=-5.3pt},
    tick label style={font = \scriptsize},
    xtick={250,500,750,1000,1250,1500,1750,2000},
    xticklabels={250,500,750,1000,1250,1500,1750,2000},
    ytick={2,3,...,30},
    yticklabel={%
        \pgfmathtruncatemacro{\ytickint}{\tick}%
            \ifodd\ytickint
        \else
            \ytickint
        \fi
    },
    minor x tick num=0,
    axis line style={black},
    tick style={black},
    grid=both,
    major grid style={gray!45, densely dotted, line width=0.15pt},
    minor grid style={gray!25, densely dotted, line width=0.15pt},
    legend style={
        at={(0.98,0.5)},
        anchor=east,
        draw=gray!35,
        fill=white,
        fill opacity=0.82,
        text opacity=1,
        font=\scriptsize,
        inner sep=0pt,
        legend cell align=left,
        /tikz/every even column/.append style={column sep=0.05cm},
    },
    legend columns=2,
    legend image code/.code={
        \draw[
            #1,
            mark repeat=2,
            mark phase=2,
            mark options={solid}
        ]
        plot coordinates {
            (0cm,0cm)
            (0.13cm,0cm)
            (0.26cm,0cm)
        };
    },
    clip mode=individual,
]

\addplot[draw=none, fill=mblue, fill opacity=0.12, forget plot]
coordinates {(250,0) (250,17) (500,20) (750,22) (1000,24) (1250,25) (1500,26) (1750,26) (2000,26) (2000,0)} -- cycle;
\addplot[draw=none, fill=mred, fill opacity=0.12, forget plot]
coordinates {(250,0) (250,19) (500,23) (750,25) (1000,27) (1250,29) (1500,29) (1750,30) (2000,30) (2000,0)} -- cycle;
\addplot[draw=none, fill=mpink, fill opacity=0.12, forget plot]
coordinates {(250,0) (250,21) (500,26) (750,29) (1000,30) (1250,30) (1500,30) (1750,30) (2000,30) (2000,0)} -- cycle;
\addplot[draw=none, fill=mcyan, fill opacity=0.12, forget plot]
coordinates {(250,0) (250,24) (500,30) (750,30) (1000,30) (1250,30) (1500,30) (1750,30) (2000,30) (2000,0)} -- cycle;
\addplot[draw=none, fill=black, fill opacity=0.12, forget plot]
coordinates {(250,0) (250,2) (500,2) (750,3) (1000,4) (1250,3) (1500,5) (1750,4) (2000,5) (2000,0)} -- cycle;

\addplot+[mblue, thick, solid, mark=*, mark size=1.8pt, mark options={draw=mblue, fill=mblue}]
coordinates {(250,17) (500,20) (750,22) (1000,24) (1250,25) (1500,26) (1750,26) (2000,26)};
\addlegendentry{$M$=250}

\addplot+[mblue, thick, dashed, mark=square*, mark size=1.8pt, mark options={draw=mblue, fill=mblue}, forget plot]
coordinates {(250,17) (500,20) (750,22) (1000,25) (1250,25) (1500,27) (1750,27) (2000,27)};

\addplot+[mred, thick, solid, mark=*, mark size=1.8pt, mark options={draw=mred, fill=mred}]
coordinates {(250,19) (500,23) (750,25) (1000,27) (1250,29) (1500,29) (1750,30) (2000,30)};
\addlegendentry{$M$=500}

\addplot+[mred, thick, dashed, mark=square*, mark size=1.8pt, mark options={draw=mred, fill=mred}, forget plot]
coordinates {(250,19) (500,24) (750,26) (1000,29) (1250,29) (1500,30) (1750,30) (2000,30)};

\addplot+[mpink, thick, solid, mark=*, mark size=1.8pt, mark options={draw=mpink, fill=mpink}]
coordinates {(250,21) (500,26) (750,29) (1000,30) (1250,30) (1500,30) (1750,30) (2000,30)};
\addlegendentry{$M$=1000}

\addplot+[mpink, thick, dashed, mark=square*, mark size=1.8pt, mark options={draw=mpink, fill=mpink}, forget plot]
coordinates {(250,22) (500,27) (750,30) (1000,30) (1250,30) (1500,30) (1750,30) (2000,30)};

\addplot+[mcyan, thick, solid, mark=*, mark size=1.8pt, mark options={draw=mcyan, fill=mcyan}]
coordinates {(250,24) (500,30) (750,30) (1000,30) (1250,30) (1500,30) (1750,30) (2000,30)};
\addlegendentry{$M$=3000}

\addplot+[mcyan, thick, dashed, mark=square*, mark size=1.8pt, mark options={draw=mcyan, fill=mcyan}, forget plot]
coordinates {(250,26) (500,30) (750,30) (1000,30) (1250,30) (1500,30) (1750,30) (2000,30)};

\addplot+[black, very thick, solid, mark=*, mark size=1.8pt, mark options={draw=black, fill=black}]
coordinates {(250,2) (500,2) (750,3) (1000,4) (1250,3) (1500,5) (1750,4) (2000,5)};
\addlegendentry{\rlap{Identity covariance}}

\addplot+[black, very thick, dashed, mark=square*, mark size=1.8pt, mark options={draw=black, fill=black}, forget plot]
coordinates {(250,2) (500,2) (750,3) (1000,4) (1250,4) (1500,5) (1750,4) (2000,5)};



\end{axis}

\begin{axis}[
    width=\dimexpr\linewidth-1.15cm\relax,
    height=3.2cm,
    scale only axis,
    xmin=230, xmax=2050,
    ymin=0, ymax=31.5,
    axis x line=none,
    axis y line*=right,
    ytick={2,3,...,30},
    yticklabel={%
        \pgfmathtruncatemacro{\ytickint}{\tick}%
            \ifodd\ytickint
        \else
            \number\numexpr(\ytickint+2)*(\ytickint+3)/2\relax
        \fi
    },
    ylabel={Decision variables (r)},
    ylabel style={font=\small, yshift=3pt, rotate=180},
    tick label style={font=\scriptsize},
    tick style={black},
    axis line style={black},
    major grid style={draw=none},
    minor grid style={draw=none},
    clip=false,
]
\end{axis}

\pgfresetboundingbox
\path[use as bounding box]
    (-0.95cm,-0.35cm) rectangle (\dimexpr\linewidth-0.95cm\relax,4.08cm);
\end{tikzpicture}
    \end{minipage}
    \hfill
    \begin{minipage}[t]{0.56\textwidth}
        \centering
        \pgfplotstableread{
dx policy_lower policy_mean policy_upper value_lower value_mean value_upper
2 -0.000004 0.000039 0.000083 0.000342 0.000928 0.001513
3 -0.000013 0.000074 0.000160 0.000778 0.001259 0.001741
4 0.000053 0.000107 0.000161 0.001514 0.002318 0.003121
5 0.000135 0.000466 0.000796 0.002901 0.004805 0.006710
6 0.000108 0.000461 0.000814 0.002572 0.005123 0.007675
7 0.000349 0.000828 0.001306 0.004865 0.007863 0.010861
8 0.000208 0.000346 0.000483 0.003748 0.004958 0.006169
9 0.000444 0.000846 0.001248 0.004455 0.006798 0.009142
10 0.000711 0.001837 0.002963 0.004811 0.010728 0.016646
11 0.000477 0.000912 0.001347 0.005638 0.008564 0.011490
12 0.001419 0.002244 0.003069 0.008111 0.012182 0.016254
13 0.001196 0.001931 0.002666 0.009520 0.012292 0.015064
14 0.000944 0.001861 0.002778 0.007506 0.011786 0.016066
15 0.001306 0.002476 0.003647 0.008483 0.011773 0.015062
16 0.001470 0.003019 0.004568 0.009114 0.014204 0.019293
17 0.002129 0.004437 0.006745 0.011047 0.017295 0.023543
18 0.003305 0.004599 0.005894 0.012390 0.016841 0.021293
19 0.003669 0.005318 0.006966 0.011550 0.017812 0.024073
20 0.003831 0.005193 0.006555 0.010888 0.015223 0.019558
}\datafivehundred

\pgfplotstableread{
dx policy_lower policy_mean policy_upper value_lower value_mean value_upper
2 -0.000000 0.000004 0.000008 -0.000009 0.000293 0.000594
3 0.000001 0.000006 0.000011 0.000068 0.000252 0.000436
4 -0.000002 0.000010 0.000022 0.000144 0.000673 0.001203
5 -0.000010 0.000037 0.000084 0.000567 0.001442 0.002317
6 -0.000013 0.000076 0.000165 0.000543 0.002014 0.003484
7 0.000042 0.000112 0.000183 0.001354 0.003118 0.004882
8 0.000014 0.000073 0.000132 0.001173 0.002183 0.003193
9 0.000015 0.000149 0.000283 0.001518 0.002807 0.004097
10 -0.000025 0.000279 0.000584 0.001404 0.004582 0.007759
11 0.000045 0.000267 0.000489 0.002087 0.004471 0.006854
12 0.000105 0.000448 0.000791 0.003538 0.006112 0.008686
13 0.000214 0.000395 0.000575 0.004587 0.006539 0.008491
14 0.000125 0.000486 0.000847 0.003826 0.006769 0.009713
15 0.000389 0.000682 0.000976 0.005012 0.007322 0.009631
16 0.000365 0.000619 0.000873 0.004968 0.008181 0.011395
17 0.000515 0.001171 0.001828 0.006331 0.011102 0.015873
18 0.000532 0.000995 0.001458 0.006852 0.009748 0.012644
19 0.000874 0.001814 0.002754 0.007271 0.012279 0.017286
20 0.000494 0.001403 0.002313 0.006616 0.009811 0.013005
21 0.001473 0.002375 0.003278 0.008041 0.012073 0.016106
22 0.000882 0.002337 0.003792 0.006702 0.011818 0.016935
23 0.000900 0.002901 0.004903 0.007625 0.014722 0.021818
24 0.001909 0.003421 0.004934 0.011107 0.015923 0.020739
25 0.002310 0.003870 0.005430 0.009961 0.014430 0.018899
26 0.003827 0.005297 0.006767 0.013405 0.016290 0.019175
27 0.005223 0.009715 0.014206 0.011045 0.013934 0.016824
}\datatwothousand

\begin{tikzpicture}
\begin{groupplot}[
    group style={group size=2 by 1, horizontal sep=0.1cm},
    width=0.425\linewidth,
    height=3.2cm,
    scale only axis,
    xmin=2, xmax=35.5,
    ymin=-0.0011, ymax=0.0253,
    xtick={5,10,15,20,25,30,35},
    ytick={0,0.005,0.010,0.015,0.020,0.025},
    scaled ticks=false,
    tick label style={font=\scriptsize},
    xticklabel={\pgfmathprintnumber[fixed,precision=0]{\tick}},
    yticklabel={\pgfmathprintnumber[fixed,fixed zerofill,precision=3]{\tick}},
    label style={font=\scriptsize},
    title style={font=\small, yshift=-1mm},
    grid=major,
    grid style={draw=gridgray, line width=0.28pt, opacity=0.65},
    axis line style={draw=black!55, line width=0.5pt},
    tick style={draw=black!55, line width=0.5pt},
    axis background/.style={fill=white},
    axis on top=true,
    clip=true,
    enlargelimits=false,
    legend style={
        draw=black!13,
        fill=white,
        fill opacity=0.90,
        text opacity=1,
        font=\scriptsize,
        cells={anchor=west},
        inner xsep=1pt,
        inner ysep=1pt,
        row sep=1pt
    },
    legend image code/.code={
        \draw[
            #1,
            mark repeat=2,
            mark phase=2,
            mark options={solid}
        ]
        plot coordinates {
            (0cm,0cm)
            (0.13cm,0cm)
            (0.26cm,0cm)
        };
    },
    online/.style={
        onlineblue,
        thick,
        mark=*,
        mark size=1.1pt,
        mark options={solid, fill=onlineblue, draw=onlineblue}
    },
    value/.style={
        valueorange,
        thick,
        mark=square*,
        mark size=1.1pt,
        mark options={solid, fill=valueorange, draw=valueorange}
    },
]

\nextgroupplot[
    legend style={at={(0.018,0.962)}, anchor=north west}
]
\path[fill=unboundedgray] (axis cs:20.35,-0.0011) rectangle (axis cs:35.5,0.0253);

\addplot[name path=p500u, draw=none, forget plot] table[x=dx,y=policy_upper] {\datafivehundred};
\addplot[name path=p500l, draw=none, forget plot] table[x=dx,y=policy_lower] {\datafivehundred};
\addplot[onlineblue, fill opacity=0.21, draw=none, forget plot] fill between[of=p500u and p500l];

\addplot[name path=v500u, draw=none, forget plot] table[x=dx,y=value_upper] {\datafivehundred};
\addplot[name path=v500l, draw=none, forget plot] table[x=dx,y=value_lower] {\datafivehundred};
\addplot[valueorange, fill opacity=0.20, draw=none, forget plot] fill between[of=v500u and v500l];

\addplot[online] table[x=dx,y=policy_mean] {\datafivehundred};
\addlegendentry{online performance}
\addplot[value] table[x=dx,y=value_mean] {\datafivehundred};
\addlegendentry{value function}

\node[rotate=50, align=center, font=\scriptsize] at (axis cs:27.3,0.0118) {LP\\unbounded};
\node[anchor=north east, draw=black!13, fill=white, fill opacity=0.82, text opacity=1, inner xsep=4pt, inner ysep=2pt, font=\scriptsize
] at (rel axis cs:0.965,0.94) {$N=500$};

\nextgroupplot[
    yticklabels={,,, , ,},
    legend style={at={(0.018,0.962)}, anchor=north west}
]
\path[fill=unboundedgray] (axis cs:27.5,-0.0011) rectangle (axis cs:35.5,0.0253);

\addplot[name path=p2000u, draw=none, forget plot] table[x=dx,y=policy_upper] {\datatwothousand};
\addplot[name path=p2000l, draw=none, forget plot] table[x=dx,y=policy_lower] {\datatwothousand};
\addplot[onlineblue, fill opacity=0.21, draw=none, forget plot] fill between[of=p2000u and p2000l];

\addplot[name path=v2000u, draw=none, forget plot] table[x=dx,y=value_upper] {\datatwothousand};
\addplot[name path=v2000l, draw=none, forget plot] table[x=dx,y=value_lower] {\datatwothousand};
\addplot[valueorange, fill opacity=0.20, draw=none, forget plot] fill between[of=v2000u and v2000l];

\addplot[online] table[x=dx,y=policy_mean] {\datatwothousand};
\addlegendentry{online performance}
\addplot[value] table[x=dx,y=value_mean] {\datatwothousand};
\addlegendentry{value function}

\node[rotate=50, align=center, font=\scriptsize] at (axis cs:31.6,0.0115) {LP\\unbounded};
\node[anchor=north east, draw=black!13, fill=white, fill opacity=0.82, text opacity=1, inner xsep=4pt, inner ysep=2pt, font=\scriptsize
] at (rel axis cs:0.965,0.94) {$N=2000$};

\end{groupplot}

\node[font=\bfseries\small]
    at ($(group c1r1.north)!0.5!(group c2r1.north)+(0,0.25cm)$)
    {Moment Matching Performance};

\node[rotate=90, font=\small]
    at ($(group c1r1.north west)!0.5!(group c1r1.south west)+(-1.05cm,0)$)
    {Normalized difference};

\node[font=\small]
    at ($(group c1r1.south)!0.5!(group c2r1.south)+(0,-0.55cm)$)
    {State dimension (n)};

\pgfresetboundingbox
\path[use as bounding box]
    (-0.95cm,-0.35cm) rectangle (\dimexpr\linewidth-0.95cm\relax,4.08cm);
\end{tikzpicture}
        
    \end{minipage}
    \caption{{\review \textbf{LTI Systems.} On the left, we show the maximum tested state dimension for which condition \eqref{ifcond} is feasible in at least $50\%$ (dashed lines) and $100\%$ (solid lines) of instances, versus the number $N$ of samples, and for different auxiliary pool sizes $M$. Larger $M$ improves boundedness, while the fixed objective baseline remains bounded only in low dimension. On the right, normalized value-function and closed-loop cost errors relative to the analytic LQR solution, for $N=500$ and $N=2000$. Solid lines show the mean across seeds and shaded regions denote one standard deviation. $M$ is set to $250$.}
    }
\label{fig:lti_systems}
\end{figure*}

We illustrate the proposed moment-matching LP method on high-dimensional, minimal-data, model-free optimal control problems. We compare our method against the {\review standard} LP method with a fixed cost vector. The experiments not only demonstrate the effectiveness of the moment-matching for obtaining bounded solutions, but also for convincing closed-loop performance of the learned policies.
Code and parameter choices are available at \href{https://github.com/lucia-pezzetti/Bounded-One-Shot-Linear-Programming}{https://github.com/lucia-pezzetti/Bounded-One-Shot-Linear-Programming}.

\subsection{LTI Systems}

We consider LTI systems $x_{k+1} = A x_k + B u_k$ of increasing state dimension $n$, input of dimension 2, and quadratic stage cost $\ell(x,u)=\|x\|^2+0.1\|u\|^2$. 
For each state dimension, we randomly generate controllable matrices $A,B$ and collect datasets of size $N$ by uniformly sampling state–action pairs and computing the corresponding next states and stage costs, populating the matrices $\bold \Phi$ and $\boldsymbol \ell$ in \eqref{data-driven_LP}. The LP is solved by selecting $q(x,u)= \begin{bsmallmatrix}
    x \\ u
\end{bsmallmatrix}^\top Q \begin{bsmallmatrix}
    x \\ u
\end{bsmallmatrix}\in\Sc^{\text{quad}}$, which produces linear feedback policies when a bounded solution exists. We compare the proposed moment-matching method with $M=250, 500, 1000, 3000$ auxiliary samples $y$ from a uniform distribution with a baseline LP with $c=\mathcal{N}(0,I)$.

Fig.~\ref{fig:lti_systems} reports {\review the largest tested state dimension for which condition~\eqref{ifcond} is feasible in at least $50\%$ (dashed lines) and $100\%$ (solid lines) of random instances, for different values of $N$ and $M$. Whenever \eqref{ifcond} is feasible, Proposition~\ref{proppoly} guarantees that the constructed objective yields a finite LP optimal value. The area under the dashed (resp. solid) line corresponds to combinations of state dimensions and constraints for which the LP is bounded in at least $50\%$ (resp. $100\%$) of instances. Larger $M$ gives a richer
auxiliary representation in~\eqref{data-driven_LP}, making the moment-matching equality easier to satisfy. The fixed-covariance baseline is independent of $M$ and remains bounded only in low dimension. For example, with $M=3000$, moment-matching reaches $100\%$ boundedness up
to $n=30$ by $N=500$, whereas the fixed-covariance baseline reaches only $n=2$.} {\review Note that in most scenarios where the fixed cost vector leads to an unbounded solution (hence the feasible region is unbounded) our method finds a finite solution instead.}

The right-hand side of Fig.~\ref{fig:lti_systems} shows that {\review when the moment-matching LP has a finite optimum, the learned policies are close to the analytic LQR
solution.} 
We report i) the normalized difference between the optimal value function {\review (computed by solving the Riccati equation)} and the LP solution, and ii) the normalized difference between the optimal value function and the \textit{online performance} of the greedy policy. 
The learned policies achieve performance within approximately $1\%$ of optimal. Increasing the number of samples reduces errors, indicating convergence toward the optimal quadratic solution.

\subsection{Nonlinear Mechanical Systems}

\begin{figure*}[!t]
    \centering
    \begin{minipage}[t]{0.38\textwidth}
    \centering
    \begin{tikzpicture}
\begin{axis}[
    width=\dimexpr\linewidth-1.15cm\relax,
    height=3.2cm,
    scale only axis,
    xmin=230, xmax=10250,
    ymin=0, ymax=10.5,
    scaled x ticks=false,
    xlabel={Number of samples (N)},
    ylabel={State dimension (n)},
    title={\textbf{Boundedness Isolines}},
    xlabel style={font=\small, yshift=3pt},
    ylabel style={font=\small, yshift=-3pt},
    title style={font=\small, yshift=-5.3pt},
    tick label style={font=\scriptsize},
    xtick={1000,2000,3000,4000,5000,6000,7000,8000,9000,10000},
    xticklabels={1000,2000,3000,4000,5000,6000,7000,8000,9000,10000},
    ytick={2,4,6,8,10},
    yticklabels={2,4,6,8,10},
    minor x tick num=0,
    axis line style={black},
    tick style={black},
    grid=both,
    major grid style={gray!45, densely dotted, line width=0.15pt},
    minor grid style={gray!25, densely dotted, line width=0.15pt},
    legend style={
        at={(0.98,0.5)},
        anchor=east,
        draw=gray!35,
        fill=white,
        fill opacity=0.82,
        text opacity=1,
        font=\scriptsize,
        inner sep=0pt,
        legend cell align=left,
        /tikz/every even column/.append style={column sep=0.05cm},
    },
    legend columns=2,
    legend image code/.code={
        \draw[
            #1,
            mark repeat=2,
            mark phase=2,
            mark options={solid}
        ]
        plot coordinates {
            (0cm,0cm)
            (0.13cm,0cm)
            (0.26cm,0cm)
        };
    },
    clip mode=individual,
]

\addplot[draw=none, fill=mplblue, fill opacity=0.12, forget plot]
coordinates {
    (1000,0) (1000,6) (2000,6) (3000,8) (4000,8) (5000,8)
    (6000,8) (7000,8) (8000,8) (9000,8) (10000,8)
    (10000,0)
} -- cycle;

\addplot[draw=none, fill=mplblue, fill opacity=0.12, forget plot]
coordinates {
    (1000,0) (1000,6) (2000,6) (3000,6) (4000,8) (5000,8)
    (6000,8) (7000,8) (8000,8) (9000,8) (10000,8)
    (10000,0)
} -- cycle;

\addplot[draw=none, fill=mplred, fill opacity=0.12, forget plot]
coordinates {
    (1000,0) (1000,6) (2000,8) (3000,8) (4000,8) (5000,8)
    (6000,8) (7000,10) (8000,10) (9000,10) (10000,10)
    (10000,0)
} -- cycle;

\addplot[draw=none, fill=mplred, fill opacity=0.12, forget plot]
coordinates {
    (1000,0) (1000,6) (2000,6) (3000,8) (4000,8) (5000,8)
    (6000,8) (7000,8) (8000,8) (9000,8) (10000,10)
    (10000,0)
} -- cycle;

\addplot[draw=none, fill=mplpink, fill opacity=0.12, forget plot]
coordinates {
    (1000,0) (1000,6) (2000,8) (3000,8) (4000,10) (5000,10)
    (6000,10) (7000,10) (8000,10) (9000,10) (10000,10)
    (10000,0)
} -- cycle;

\addplot[draw=none, fill=mplpink, fill opacity=0.12, forget plot]
coordinates {
    (1000,0) (1000,6) (2000,8) (3000,8) (4000,8) (5000,10)
    (6000,10) (7000,10) (8000,10) (9000,10) (10000,10)
    (10000,0)
} -- cycle;

\addplot[draw=none, fill=mplcyan, fill opacity=0.12, forget plot]
coordinates {
    (1000,0) (1000,8) (2000,8) (3000,10) (4000,10) (5000,10)
    (6000,10) (7000,10) (8000,10) (9000,10) (10000,10)
    (10000,0)
} -- cycle;

\addplot[draw=none, fill=mplcyan, fill opacity=0.12, forget plot]
coordinates {
    (1000,0) (1000,8) (2000,8) (3000,8) (4000,8) (5000,10)
    (6000,10) (7000,10) (8000,10) (9000,10) (10000,10)
    (10000,0)
} -- cycle;

\addplot[draw=none, fill=mplgray, fill opacity=0.12, forget plot]
coordinates {
    (1000,0) (1000,2) (2000,2) (3000,2) (4000,2) (5000,2)
    (6000,2) (7000,2) (8000,2) (9000,2) (10000,2)
    (10000,0)
} -- cycle;


\addplot+[mplblue, thick, dashed, mark=square*, mark size=1.8pt,
    mark options={draw=mplblue, fill=mplblue}, forget plot]
coordinates {
    (1000,6) (2000,6) (3000,8) (4000,8) (5000,8)
    (6000,8) (7000,8) (8000,8) (9000,8) (10000,8)
};

\addplot+[mplblue, thick, solid, mark=*, mark size=1.8pt,
    mark options={draw=mplblue, fill=mplblue}]
coordinates {
    (1000,6) (2000,6) (3000,6) (4000,8) (5000,8)
    (6000,8) (7000,8) (8000,8) (9000,8) (10000,8)
};
\addlegendentry{$M$=500}

\addplot+[mplred, thick, dashed, mark=square*, mark size=1.8pt,
    mark options={draw=mplred, fill=mplred}, forget plot]
coordinates {
    (1000,6) (2000,8) (3000,8) (4000,8) (5000,8)
    (6000,8) (7000,10) (8000,10) (9000,10) (10000,10)
};

\addplot+[mplred, thick, solid, mark=*, mark size=1.8pt,
    mark options={draw=mplred, fill=mplred}]
coordinates {
    (1000,6) (2000,6) (3000,8) (4000,8) (5000,8)
    (6000,8) (7000,8) (8000,8) (9000,8) (10000,10)
};
\addlegendentry{$M$=1000}

\addplot+[mplpink, thick, dashed, mark=square*, mark size=1.8pt,
    mark options={draw=mplpink, fill=mplpink}, forget plot]
coordinates {
    (1000,6) (2000,8) (3000,8) (4000,10) (5000,10)
    (6000,10) (7000,10) (8000,10) (9000,10) (10000,10)
};

\addplot+[mplpink, thick, solid, mark=*, mark size=1.8pt,
    mark options={draw=mplpink, fill=mplpink}]
coordinates {
    (1000,6) (2000,8) (3000,8) (4000,8) (5000,10)
    (6000,10) (7000,10) (8000,10) (9000,10) (10000,10)
};
\addlegendentry{$M$=2500}

\addplot+[mplcyan, thick, dashed, mark=square*, mark size=1.8pt,
    mark options={draw=mplcyan, fill=mplcyan}, forget plot]
coordinates {
    (1000,8) (2000,8) (3000,10) (4000,10) (5000,10)
    (6000,10) (7000,10) (8000,10) (9000,10) (10000,10)
};

\addplot+[mplcyan, thick, solid, mark=*, mark size=1.8pt,
    mark options={draw=mplcyan, fill=mplcyan}]
coordinates {
    (1000,8) (2000,8) (3000,8) (4000,8) (5000,10)
    (6000,10) (7000,10) (8000,10) (9000,10) (10000,10)
};
\addlegendentry{$M$=5000}

\addplot+[mplgray, very thick, dashed, mark=square*, mark size=1.8pt,
    mark options={draw=mplgray, fill=mplgray}, forget plot]
coordinates {
    (1000,2) (2000,2) (3000,2) (4000,2) (5000,2)
    (6000,2) (7000,2) (8000,2) (9000,2) (10000,2)
};

\addplot+[mplgray, very thick, solid, mark=*, mark size=1.8pt,
    mark options={draw=mplgray, fill=mplgray}]
coordinates {
    (1000,2) (2000,2) (3000,2) (4000,2) (5000,2)
    (6000,2) (7000,2) (8000,2) (9000,2) (10000,2)
};
\addlegendentry{\rlap{Identity covariance}}

\end{axis}

\begin{axis}[
    width=\dimexpr\linewidth-1.15cm\relax,
    height=3.2cm,
    scale only axis,
    xmin=230, xmax=10250,
    ymin=0, ymax=10.5,
    axis x line=none,
    axis y line*=right,
    ytick={2,4,6,8,10},
    yticklabels={21,120,406,1035,2211},
    ylabel={Decision variables (r)},
    ylabel style={font=\small, yshift=6pt, rotate=180},
    tick label style={font=\scriptsize},
    tick style={black},
    axis line style={black},
    major grid style={draw=none},
    minor grid style={draw=none},
    clip=false,
]
\end{axis}

\pgfresetboundingbox
\path[use as bounding box]
    (-0.95cm,-0.35cm) rectangle (\dimexpr\linewidth-0.95cm\relax,4.08cm);
\end{tikzpicture}
    \end{minipage}
    \hfill
    \begin{minipage}[t]{0.6\textwidth}
        \centering
        \input{trajectory_side_by_side_plot}
        
    \end{minipage}
    \caption{{\review \textbf{Nonlinear Mechanical Systems.} On the left, we show the largest tested state dimension for which condition \eqref{ifcond} is feasible in at least $50\%$ (dashed lines) and $100\%$ (solid lines) of instances, for different auxiliary pool sizes $M$. Also in this nonlinear example, the moment-matching formulation remains feasible for significantly larger dimensions compared to the baseline and larger $M$ improves boundedness. Note that, because of the polynomial features, here the number of decision variables scales combinatorially with the state dimension. On the right, the moment-matching (MM) controller steers the system to the unstable equilibrium at the origin, while the uncontrolled trajectories are attracted by other stable equilibria.}}
    \label{fig:nonlinear_system}
\end{figure*}

We next consider nonlinear $n$-dimensional point-mass systems of increasing dimension with elastic coupling, {\review nonlinear gravitational term,} and cubic velocity drag. 
The continuous-time dynamics are
\begin{equation*}
\dot p = v, 
\qquad
m \dot v = -Kp + {\review G\tanh{p}} -({\review\beta}\|v\|^2 + {\review\xi}) v + Bu,
\end{equation*}
where the state $x = \begin{bmatrix}
    p & v
\end{bmatrix}^\top \in \mathbb{R}^{n}$ collects the positions and velocities of the masses, $u \in \mathbb{R}$ is a scalar input, ${\review\beta}\in\mathbb{R}_+$ is the cubic-drag coefficient, {\review $\xi \in\mathbb{R}_+$ is the viscous damping coefficient, $K,G\in\mathbb{R}^{n/2 \times n/2}$ are the stiffness and gravitational matrices, respectively. All the masses are equal to $m\in\mathbb{R}_+$. The linearized dynamics at the origin are $\dot p = v$ and $m \dot v = (G-K)p-\xi v$. At each instance, we select $K$ and $G$ so that $G-K$ has at least one positive eigenvalue, making the origin unstable.} The system is discretized with {\review a fourth-order Runge-Kutta scheme} and a discounted infinite-horizon objective with stage cost $\ell(x,u)=\|x\|^2+0.01\|u\|^2+{\review\sum_i p_i^4}$ is considered. To assess robustness across instances, physical parameters are randomized across seeds. The approximate LP~\eqref{data-driven_LP} is solved using $q\in\Sc^{\text{poly}}_{u^2}$ with degree 4 in $x$.
We compare the proposed moment-matching method of Proposition~\ref{proppoly}, with $M=500, 1000, 2500, 5000$ uniformly distributed auxiliary samples, with a baseline LP using $c=\mathcal{N}(0,I)$.

    Fig.~\ref{fig:nonlinear_system} {\review reports the largest state dimension for which condition~\eqref{ifcond} is feasible in at least $50\%$ and $100\%$ of the runs, for different auxiliary pool sizes $M$. The nonlinear result obtained can be compared with the linear one in Fig.~\ref{fig:lti_systems} by analyzing how the boundedness rate scales with the number of decision variables in the LP, rather than the state dimension. Similarly to the linear case, the moment-matching formulation remains feasible for significantly larger dimensions and smaller datasets than the baseline and increasing $M$ improves boundedness. Fig.~\ref{fig:nonlinear_system} (right) illustrates closed-loop trajectories for $n=4$, showing that the learned polynomial controllers steer the system toward the unstable equilibrium.}

\subsection{Computational Aspects}

Conditions \eqref{iffcond}-\eqref{ifcond} are computationally tractable: the former is a linear matrix inequality in the variable $\lambda$, and the latter a linear feasibility problem with $r$ linear equations and nonnegativity constraints in the variables $\lambda$ and $\mu$. However, since typically $N+M>r$, this may result in a set of underdetermined equations, suggesting that there exist many directions leading to a finite value. We set $\|\lambda\|_1 = 1$ to fix the scale and to avoid the zero solution; otherwise, the solver-selected solution is arbitrary. The problem of searching for specific directions to maximize performance is worth investigating and deferred to future studies.

\section{CONCLUSIONS}

The proposed moment-matching method computes a data-driven cost vector leading to a finite solution for the approximate LP approach. In addition to the usual cost of solving the LP, our method adds the cost of a set of linear equations, but, in return, finally positions the LP approach as a reliable alternative to other dynamic programming formulations.

We provide results for deterministic nonlinear systems and polynomial features, but we are interested in extending the method to stochastic dynamics and richer basis functions. Finally, although the experiments are encouraging, we envision further investigation of theoretical performance bounds.



\bibliographystyle{IEEEtran}
\bibliography{Bibliography}

\end{document}